\newtheorem{thm}{Theorem}[section]
\newtheorem{cor}[thm]{Corollary}
\newtheorem{lem}[thm]{Lemma}
\newtheorem{prop}[thm]{Proposition}
\begin{document}


\title{Status connectivity indices and co-indices of graphs and its computation to intersection graph, hypercube, Kneser graph and achiral polyhex nanotorus}

\author{Harishchandra S. Ramane$^a$\footnote{hsramane@yahoo.com},\quad Ashwini S. Yalnaik$^a$\footnote{ashwiniynaik@gmail.com}, \quad Reza Sharafdini$^b$\footnote{sharafdini@pgu.ac.ir}\\[3mm]
$^a$Department of Mathematics, Karnatak University, Dharwad-580003, India\\
$^b$ Department of Mathematics, Faculty of Science, Persian Gulf University,\\
 Bushehr, 7516913817, Iran
}

%


\maketitle

\begin{abstract}
The status of a vertex $u$ in a connected graph $G$, denoted by $\sigma_G(u)$, is defined as the sum of the distances between $u$ and all other vertices of a graph $G$. The first and second status connectivity indices of a graph $G$ are defined as $S_{1}(G) = \sum_{uv \in E(G)}[\sigma_G(u)+ \sigma_G(v)]$ and $S_{2}(G) = \sum_{uv \in E(G)}\sigma_G(u)\sigma_G(v)$ respectively, where $E(G)$ denotes the edge set of $G$. In this paper we have defined the first and second status co-indices of a graph $G$ as $\overline{S_{1}}(G) = \sum_{uv \notin E(G)}[\sigma_G(u)+ \sigma_G(v)]$ and $\overline{S_{2}}(G) = \sum_{uv \notin E(G)}\sigma_G(u)\sigma_G(v)$ respectively. Relations between status connectivity indices and status coindices are established. Also these indices are computed for intersection graph, hypercube, Kneser graph and achiral polyhex nanotorus.\\

\textbf{Keywords:}
Distance in graph, status indices ,   transmission regular graphs ,   intersection graph ,   Kneser graph ,   achiral polyhex nanotorus.
\end{abstract}

\section{Introduction}
The graph theoretic models can be used to study the properties of molecules in theoretical chemistry. The oldest well known graph parameter is the Wiener index which was used  to study the chemical properties of paraffins \cite{Wie}. The Zagreb indices were used to study the structural property models \cite{Gut4,Tod}. Recently, Ramane and Yalnaik \cite{Ram1}, introduced the status connectivity indices based on the distances and correlated it with the boiling point of benzenoid hydrocarbons. In this paper we define the status co-indices of a graph and establish the relations between the status connectivity indices and status co-indices. Also we obtain the bounds for the status conncitivity indices of connected complement graphs. Furher we compute these status indices for intersection graph, hypercube, Kneser graph and achiral polyhex nanotorus.\\

Let $G$ be a connected graph with $n$ vertices and $m$ edges. Let $V(G)= \{v_{1},v_{2}, \ldots, v_{n}\}$ be the vertex set of $G$ and $E(G)$ be an edge set of $G$. The edge joining the vertices $u$ and $v$ is denoted by $uv$. The \textit{degree} of a vertx $u$ in a graph $G$ is the number of edges joining to $u$ and is denoted by $d_{G}(u)$. The \textit{distance} between the vertices $u$ and $v$ is the length of the shortest path joining $u$ and $v$ and is denoted by $d_{G}(u,v)$.\\

The \textit{status} (or \textit{transmission})  of a vertex $u \in V(G)$, denoted by $\sigma_G(u)$ is defined as \cite{Har1},
\[\sigma_G(u) = \sum_{v\in V(G)}d(u,v).\]

A connected graph $G$ is said to be \textit{$k$-transmission regular} if $\sigma_G(u) = k$ for every vertex $u\in V(G)$. The transmission regular graphs are exactly the \textit{distance-balanced} graphs introduced in \cite{Handa}. They are
also called as \textit{self-median} graphs \cite{Cabello}.

The \textit{Wiener index} $W(G)$ of a connected graph $G$ is defined as \cite{Wie},
\[ W(G) = \sum_{\{u, v\} \subseteq V(G)}d_G(u,v) = \frac{1}{2}\sum_{u \in V(G)} \sigma_G(u). \]

More results about Wiener index can be found in \cite{Das1,Dob,Gut5,Nik,Ram2,Ram3,Wal}.\\

The first and second \textit{Zagreb indices} of a graph $G$ are defined as \cite{Gut4}
\[ M_1(G) = \sum_{uv \in E(G)}\left[d_G(u) + d_G(v) \right] \hspace{5mm} \mathrm{and} \hspace{5mm} M_2(G) = \sum_{uv \in E(G)}d_G(u)d_G(v). \]

Results on the Zagreb indices can be found in \cite{Das2,Gut1,Gut2,Kha,Nik2,Zhou}.\\

The first and second Zagreb co-indices of a graph $G$ are defined as \cite{Dos1}
\[ \overline{M_{1}}(G)= \sum_{uv \not\in E(G)}\left[d_{G}(u) + d_{G}(v) \right] \hspace{5mm} \mathrm{and} \hspace{5mm}  \overline{M_{2}}(G)= \sum_{uv \not\in E(G)}\left[d_{G}(u)  d_{G}(v) \right]. \]

More results on Zagreb coindices can be found in \cite{Ash3,Ash4}.

Recently, the first and second status connectivity index of a graph $G$ have been introduced by Ramane and Yalnaik \cite{Ram1} to study the property of benzenoid hydrocarbons and these are defined as
\begin{equation} \label{Eq1}
S_1(G) = \sum_{uv \in E(G)}\left[\sigma_G(u) + \sigma_G(v) \right]
\hspace{5mm} \mathrm{and} \hspace{5mm}
S_2(G) = \sum_{uv \in E(G)}\sigma_G(u)\sigma_G(v).
\end{equation}

Similar to Eq. (\ref{Eq1}) and the definition of Zagreb co-index, we define here the first status co-index $\overline{S_{1}}(G)$ and the second status co-index $\overline{S_{2}}(G)$ as

\[ \overline{S_{1}}(G)= \sum_{uv \not\in E(G)}\left[\sigma_G(u) + \sigma_G(v) \right]
\hspace{5mm} \mathrm{and} \hspace{5mm}
\overline{S_{2}}(G)= \sum_{uv \not\in E(G)}\left[\sigma_G(u)  \sigma_G(v) \right]. \]

\noindent
\textbf{Example:}
\begin{center}
	\begin{picture}(65,75)(4,10)
	\thicklines
	\put(20,20){\line(1,0){40}} \put(20,60){\line(1,0){40}} \put(20,20){\line(1,1){40}}
	\put(20,60){\line(1,1){20}} \put(60,60){\line(-1,1){20}}
	\put(20,20){\line(0,1){40}}  \put(60,20){\line(0,1){40}} \put(40,82){\line(-1,-3){20}}
	
	\put(60,20){\circle*{5}} \put(20,60){\circle*{5}} \put(20,20){\circle*{5}} \put(60,60){\circle*{5}} \put(40,80){\circle*{5}}
	
	\put(8,20){$v_3$} \put(64,20){$v_4$} \put(8,60){$v_2$} \put(35,84){$v_1$} \put(64,60){$v_5$}
	
	\put(20,3){Figure 1}
		
	\end{picture}
	\end{center}
\vspace{2mm}
For a graph given in Fig. 1, $S_{1}=74$, $S_{2}= 169$, $\overline{S_{1}}=11$, $\overline{S_{2}}= 60$.\\

\section{Status connectivity indices and co-indices}
\noindent
Status connectivity indices of connected graphs are obtained in [24], In this section we obtain the status coindices and also status indices of complements.\\

 \begin{prop} \label{Prop2.1}
	Let $G$ be a connected graph  on $n$ vertices.\\
	Then
	\[\overline{S_{1}}(G)= 2(n-1)W(G)- S_{1}(G)\]
	and
	\[ \overline{S_{2}}(G) = 2(W(G))^2 - \frac{1}{2}\sum_{u \in V(G)}(\sigma_G(u))^2 - S_{2}(G).\]
\end{prop}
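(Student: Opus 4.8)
The plan is to exploit the elementary observation that the collection of unordered pairs of distinct vertices of $G$ partitions into the edge set $E(G)$ together with the set of non-edges. Consequently, for any symmetric function $f(u,v)$ of a pair of vertices, the sum of $f$ over all pairs equals its sum over $E(G)$ plus its sum over the non-edges. Applying this to $f(u,v)=\sigma_G(u)+\sigma_G(v)$ gives at once
\[
\sum_{\{u,v\}\subseteq V(G)}\left[\sigma_G(u)+\sigma_G(v)\right] = S_1(G) + \overline{S_1}(G).
\]
To finish the first identity I would evaluate the left-hand side by a counting argument: in the sum over all unordered pairs, a fixed vertex $u$ contributes its status $\sigma_G(u)$ exactly once for each of the remaining $n-1$ vertices with which it is paired. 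Hence the left-hand side equals $(n-1)\sum_{u\in V(G)}\sigma_G(u)$, and the stated relation $W(G)=\tfrac12\sum_{u}\sigma_G(u)$ converts this into $2(n-1)W(G)$. Rearranging yields $\overline{S_1}(G)=2(n-1)W(G)-S_1(G)$.

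For the second identity I would use the same partition with $f(u,v)=\sigma_G(u)\sigma_G(v)$, giving
\[
\sum_{\{u,v\}\subseteq V(G)}\sigma_G(u)\sigma_G(v) = S_2(G)+\overline{S_2}(G),
\]
and then evaluate the all-pairs sum through the standard square-expansion identity $\bigl(\sum_{u}\sigma_G(u)\bigr)^2 = \sum_{u}\sigma_G(u)^2 + 2\sum_{\{u,v\}}\sigma_G(u)\sigma_G(v)$. Solving for the pair-sum and substituting $\sum_{u}\sigma_G(u)=2W(G)$ produces $\sum_{\{u,v\}}\sigma_G(u)\sigma_G(v) = 2\bigl(W(G)\bigr)^2 - \tfrac12\sum_{u}\sigma_G(u)^2$. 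Subtracting $S_2(G)$ then delivers the claimed formula for $\overline{S_2}(G)$.

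There is no substantive obstacle in this argument; the entire proof is a matter of bookkeeping. The only points that require a moment's care are to perform the pair-counting over \emph{unordered} pairs so as not to introduce a spurious factor of two, and to keep the normalization consistent with the convention $W(G)=\tfrac12\sum_{u}\sigma_G(u)$ so that the coefficients $2(n-1)$ and $2$ come out exactly as stated.
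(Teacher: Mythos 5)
Your proposal is correct and follows essentially the same route as the paper: both split the sum over all unordered pairs of vertices into the edge and non-edge contributions, evaluate the all-pairs sum of $\sigma_G(u)+\sigma_G(v)$ as $(n-1)\sum_u\sigma_G(u)=2(n-1)W(G)$, and handle the product case via the square-expansion identity together with $\sum_u\sigma_G(u)=2W(G)$. No gaps; the bookkeeping and normalizations are handled exactly as in the paper's own argument.
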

\begin{proof}
	\begin{eqnarray}\nonumber
	\overline{S_{1}}(G)& = & \sum_{uv \not \in E(G)} \left[\sigma_{G}(u)+\sigma_{G}(v)\right]\\ \nonumber
	& = & \sum_{\left\lbrace u,v \right\rbrace \subseteq V(G)} \left[\sigma_{G}(u)+\sigma_{G}(v)\right]- \sum_{uv  \in E(G)} \left[\sigma_{G}(u)+\sigma_{G}(v)\right]\\ \nonumber
	& = & (n-1)\sum_{u \in V(G)} \sigma_{G}(u) - S_{1}(G)\\ \nonumber
	& = & 2(n-1)W(G)- S_{1}(G).
	\end{eqnarray}
	
	Also
	
	\begin{eqnarray}\nonumber
	\overline{S_{2}}(G)& = & \sum_{uv \not\in E(G)}\left[\sigma_G(u)  \sigma_G(v) \right] \\ \nonumber
	& = & \sum_{\left\lbrace u,v \right\rbrace \subseteq V(G)} \left[\sigma_{G}(u)\sigma_{G}(v)\right] - \sum_{uv \in E(G)}\left[\sigma_G(u) \sigma_G(v) \right] \\ \nonumber
	& = & \frac{1}{2}\left[\left[\sum_{u \in V(G)} \sigma_G(u)\right]^2 - \sum_{u \in V(G)} \sigma_G(u)^{2}\right] - S_{2}(G)\\\nonumber
	& = & 2(W(G))^2 - \frac{1}{2}\sum_{u \in V(G)}(\sigma_G(u))^2 - S_{2}(G).
	\end{eqnarray}
\end{proof}

\begin{cor} \label{Cor2.2}
	Let $G$ be a connected graph with $n$ vertices, $m$ edges and $diam(G)\leq 2$. Then
	\[ \overline{S_{1}}(G) = 2n(n-1)^2 -6m(n-1)+ M_{1}(G) \]
	and
	\[\overline{S_{2}}(G) =  (n-1)^2\left[2n(n-1)-8m\right] + 2m^2 + \left(2n - \frac{5}{2} \right)M_1(G) - M_2(G).\]
\end{cor}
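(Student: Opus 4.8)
The plan is to exploit the diameter hypothesis to reduce every status to a degree, and then feed the resulting expressions into Proposition~\ref{Prop2.1}, where $\overline{S_1}$ and $\overline{S_2}$ are already written in terms of $W(G)$, $S_1(G)$, $S_2(G)$ and $\sum_u \sigma_G(u)^2$. In this way the corollary becomes a substitution exercise rather than a fresh counting argument.

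First I would establish the fundamental identity. Since $\mathrm{diam}(G)\le 2$, each of the $n-1$ vertices other than $u$ lies at distance $1$ (if it is a neighbour of $u$) or at distance $2$ (otherwise). Counting the $d_G(u)$ neighbours and the remaining $n-1-d_G(u)$ vertices gives
\[
\sigma_G(u) = 1\cdot d_G(u) + 2\bigl(n-1-d_G(u)\bigr) = 2(n-1) - d_G(u).
\]
Summing over all vertices and using the handshake identity $\sum_u d_G(u) = 2m$ yields $W(G) = \tfrac12\sum_u \sigma_G(u) = n(n-1) - m$.

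Next I would translate each ingredient of Proposition~\ref{Prop2.1} into degree language. Substituting the identity into the edge sums gives $S_1(G) = 4m(n-1) - M_1(G)$, and expanding the product $[2(n-1)-d_G(u)][2(n-1)-d_G(v)]$ over $E(G)$ gives $S_2(G) = 4m(n-1)^2 - 2(n-1)M_1(G) + M_2(G)$, where I use the definitions $M_1(G)=\sum_{uv\in E(G)}[d_G(u)+d_G(v)]$ and $M_2(G)=\sum_{uv\in E(G)}d_G(u)d_G(v)$. For the quadratic term I would expand $\sum_u \sigma_G(u)^2 = \sum_u [2(n-1)-d_G(u)]^2$ and invoke the standard identity $\sum_u d_G(u)^2 = M_1(G)$ to obtain $\sum_u \sigma_G(u)^2 = 4n(n-1)^2 - 8m(n-1) + M_1(G)$. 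The first formula then follows at once from $\overline{S_1}(G) = 2(n-1)W(G) - S_1(G)$.

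The $\overline{S_2}$ formula is where the real work lies. I would substitute $W(G) = n(n-1)-m$, the expression for $\sum_u \sigma_G(u)^2$, and $S_2(G)$ into $\overline{S_2}(G) = 2(W(G))^2 - \tfrac12\sum_u \sigma_G(u)^2 - S_2(G)$, then collect terms by type. The main obstacle is purely organizational bookkeeping: the degree-free terms should consolidate into $2n(n-1)^3 - 8m(n-1)^2 + 2m^2$, which is exactly $(n-1)^2[2n(n-1)-8m]+2m^2$; the $M_1$-contributions $-\tfrac12 M_1(G)$ (from the quadratic term) and $+2(n-1)M_1(G)$ (from $S_2$) must combine to $(2n-\tfrac52)M_1(G)$; and the lone $-M_2(G)$ from $S_2(G)$ carries through unchanged. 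The only nonroutine point to watch is that the alternative representation $M_1(G)=\sum_u d_G(u)^2$ is genuinely needed to eliminate $\sum_u d_G(u)^2$, since the statement is phrased entirely in the Zagreb indices $M_1$ and $M_2$.
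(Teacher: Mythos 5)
Your proposal is correct and follows essentially the same route as the paper: reduce $\sigma_G(u)$ to $2(n-1)-d_G(u)$ via the diameter hypothesis, compute $W(G)=n(n-1)-m$, use the known expressions for $S_1$ and $S_2$ in terms of $M_1$ and $M_2$ (which the paper cites from \cite{Ram1} rather than rederiving), expand $\sum_u\sigma_G(u)^2$ using $\sum_u d_G(u)^2=M_1(G)$, and substitute everything into Proposition~\ref{Prop2.1}. All the intermediate identities and the final bookkeeping in your plan check out against the paper's computation.
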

\begin{proof}
	For any graph $G$ of $diam(G) \leq 2$, $\sigma_G(u)=2n-2-d_G(u)$ and
	\[ W(G) = m+ 2\left[\frac{n(n-1)}{2}-m \right] = n(n-1)-m. \]
	Also $S_1(G) = 4m(n - 1) - M_1(G)$ and $S_2(G) = 4m(n - 1)^2 - 2(n - 1)M_1(G) + M_2(G)$ \cite{Ram1}.\\
	Therefore by Proposition \ref{Prop2.1},
	\begin{eqnarray}\nonumber
	\overline{S_{1}}(G) & = & 2(n-1)[n(n-1)-m]- \left\lbrace 4m(n-1)- M_{1}(G)\right\rbrace\\\nonumber
	& = & 2n(n-1)^2- 6m(n-1)+M_{1}(G)
	\end{eqnarray}
	and
	\begin{eqnarray}\nonumber
	\overline{S_{2}}(G) & = & 2\left[n(n-1) - m\right]^2 - \frac{1}{2}\sum_{u \in V(G)}(2n-2-d_G(u))^2 \\ \nonumber
	& & - \; \left[4m(n-1)^2 - 2(n-1)M_1(G) + M_2(G) \right]\\\nonumber
	& = & 2\left[n(n-1) - m\right]^2 - \frac{1}{2}\left[\sum_{u \in V(G)}(2n-2)^2 - 2(2n-2)\sum_{u \in V(G)}d_G(u) \right.\\ \nonumber
	& & \left. + \; \sum_{u \in V(G)}(d_G(u))^2 \right] - \left[4m(n-1)^2 - 2(n-1)M_1(G) + M_2(G) \right]\\\nonumber
	& = & 2\left[n(n-1) - m\right]^2 - \frac{1}{2}\left[n(2n-2)^2 - 4m(2n-2) + M_1(G) \right] \\ \nonumber
	& & - \; \left[4m(n-1)^2 - 2(n-1)M_1(G) + M_2(G) \right]\\\nonumber
	& = & (n-1)^2\left[2n(n-1)-8m\right] + 2m^2 + \left(2n - \frac{5}{2} \right)M_1(G) - M_2(G).
	\end{eqnarray}
\end{proof}

\begin{prop} \label{Prop2.3}
	Let $G$ be a connected graph with $n$ vertices, $m$ edges and $diam(G)\leq 2$. Then
	\[ \overline{S_{1}}(G) = 2(n-1)\left[n(n-1) - 2m \right] - \overline{M_{1}}(G) \]
	and
	\[\overline{S_{2}}(G) =  2(n-1)^2\left[n(n-1)-2m\right] -2(n-1)\overline{M_{1}}(G)+\overline{M_{2}}(G). \]
\end{prop}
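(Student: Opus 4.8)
The plan is to substitute the diameter-two identity $\sigma_G(u) = 2n - 2 - d_G(u)$ (already established in the proof of Corollary~\ref{Cor2.2}) directly into the definitions of $\overline{S_{1}}(G)$ and $\overline{S_{2}}(G)$, and then to recognise the resulting sums over non-adjacent pairs as Zagreb co-indices. The only auxiliary count needed is the number of non-edges, namely $\binom{n}{2} - m = \tfrac{1}{2}n(n-1) - m$.

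For the first co-index, writing $\sigma_G(u) + \sigma_G(v) = 4(n-1) - (d_G(u) + d_G(v))$ for each non-adjacent pair gives
\begin{eqnarray}\nonumber
\overline{S_{1}}(G) &=& \sum_{uv \notin E(G)}\bigl[4(n-1) - (d_G(u) + d_G(v))\bigr] \\ \nonumber
&=& 4(n-1)\left[\tfrac{1}{2}n(n-1) - m\right] - \overline{M_{1}}(G),
\end{eqnarray}
which simplifies to the claimed formula. The factor $4(n-1)$ arises from summing the constant $2(2n-2)$ over all non-edges, while the remaining degree sum is exactly $\overline{M_{1}}(G)$ by definition.

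For the second co-index, I would expand the product $\sigma_G(u)\sigma_G(v) = (2(n-1) - d_G(u))(2(n-1) - d_G(v))$ as
\[ 4(n-1)^2 - 2(n-1)\bigl(d_G(u) + d_G(v)\bigr) + d_G(u)d_G(v), \]
sum over all non-adjacent pairs, and identify the three resulting contributions as $4(n-1)^2$ times the non-edge count, then $-2(n-1)\overline{M_{1}}(G)$, and finally $+\overline{M_{2}}(G)$. Collecting these reproduces the stated expression, using $4(n-1)^2 \cdot \tfrac{1}{2}(n(n-1)-2m) = 2(n-1)^2[n(n-1)-2m]$ for the first term.

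There is no genuine obstacle here: once the diameter-two substitution is in place, both identities follow by linearity of the sum and the definitions of the Zagreb co-indices. The only thing to watch is bookkeeping the constant terms and the factor of two hidden in the non-edge count. An alternative derivation would start from Corollary~\ref{Cor2.2} and convert $M_1, M_2$ into $\overline{M_{1}}, \overline{M_{2}}$ via the standard relations $\overline{M_{1}}(G) = 2m(n-1) - M_1(G)$ and $\overline{M_{2}}(G) = 2m^2 - \tfrac{1}{2}M_1(G) - M_2(G)$, but the direct substitution is shorter and avoids reintroducing $W(G)$.
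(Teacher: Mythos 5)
Your proof is correct and follows essentially the same route as the paper: substitute $\sigma_G(u)=2n-2-d_G(u)$ (valid since $diam(G)\leq 2$) into the definitions, expand over the $\binom{n}{2}-m$ non-edges, and identify the degree sums as $\overline{M_{1}}(G)$ and $\overline{M_{2}}(G)$. The arithmetic checks out, so no further changes are needed.
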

\begin{proof}
	For any graph $G$ of $diam(G) \leq 2$, $\sigma_G(u)=2n-2-d_G(u)$.
	Therefore
	\begin{eqnarray}\nonumber
	\overline{S_{1}}(G) & = & \sum_{uv \notin E(G)}\left[ (2n-2-d_G(u)) + (2n-2-d_G(v))\right]\\\nonumber
	&  = & \left[ \frac{n(n-1)}{2}-m \right](4n-4) -\sum_{uv \notin E(G)}\left[d_G(u)+d_G(v)\right] \\ \nonumber
	& = & 2(n-1)\left[n(n-1) - 2m \right] - \overline{M_{1}}(G).
	\end{eqnarray}
	and
	\begin{eqnarray}\nonumber
	\overline{S_{2}}(G) & = & \sum_{uv \not \in E(G)}\left[ (2n-2-d_G(u))(2n-2-d_G(v))\right]\\\nonumber
	&  = & \left[ \frac{n(n-1)}{2}-m \right](2n-2)^2 -(2n-2)\sum_{uv \not \in E(G)}\left[d_G(u)+d_G(v)\right] \\ \nonumber
	& & + \; \sum_{uv \not \in E(G)}(d_G(u)d_G(v))\\\nonumber
	& = & 2(n-1)^2\left[ n(n-1)-2m\right]-2(n-1)\overline{M_{1}}(G)+\overline{M_{2}}(G).
	\end{eqnarray}
\end{proof}

\begin{prop} \label{Prop2.4}
	Let $G$ be a graph with $n$ vertices and $m$ edges. Let $\overline{G}$, the complement of $G$, be connected. Then
	\[ S_{1}(\overline{G}) \geq (n-1)[n(n-1)-2m]+ \overline{M_{1}}(G) \]
	and
	\[ S_{2}(\overline{G}) \geq  (n-1)^2 \left[\frac{n(n-1)}{2}-m \right] + (n-1)\overline{M_{1}}(G)+\overline{M_{2}}(G). \]
	Equality holds if and only if $diam(\overline{G}) \leq 2$.
\end{prop}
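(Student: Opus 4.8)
The plan is to reduce the whole statement to a single vertex-wise estimate and then sum it over the non-edges of $G$, which are exactly the edges of $\overline{G}$. First I would establish the elementary inequality
\[ \sigma_{\overline{G}}(u) \ge 2(n-1) - d_{\overline{G}}(u) \]
valid for every vertex $u$ of the connected graph $\overline{G}$: the $d_{\overline{G}}(u)$ neighbours of $u$ each contribute distance $1$, while each of the remaining $n-1-d_{\overline{G}}(u)$ vertices lies at distance at least $2$, so their total contribution is at least twice their number. Equality here holds precisely when every non-neighbour of $u$ sits at distance exactly $2$ from $u$. Since $d_{\overline{G}}(u) = (n-1) - d_G(u)$, this rewrites in terms of $G$ as
\[ \sigma_{\overline{G}}(u) \ge (n-1) + d_G(u). \]

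Next I would substitute this into the two definitions. Because $uv \in E(\overline{G})$ if and only if $uv \notin E(G)$, we have $S_1(\overline{G}) = \sum_{uv \notin E(G)}[\sigma_{\overline{G}}(u) + \sigma_{\overline{G}}(v)]$; applying the bound term by term and collecting the constant $2(n-1)$ over the $\tfrac{n(n-1)}{2} - m$ non-edges of $G$, together with the identity $\sum_{uv \notin E(G)}[d_G(u)+d_G(v)] = \overline{M_1}(G)$, yields the first inequality. For the second index, since all the quantities involved are positive, the term-by-term bound lifts to the product, $\sigma_{\overline{G}}(u)\sigma_{\overline{G}}(v) \ge [(n-1)+d_G(u)][(n-1)+d_G(v)]$; expanding the right-hand side into the pieces $(n-1)^2$, $(n-1)[d_G(u)+d_G(v)]$ and $d_G(u)d_G(v)$ and summing over the non-edges of $G$ produces the terms $(n-1)^2\bigl[\tfrac{n(n-1)}{2}-m\bigr]$, $(n-1)\overline{M_1}(G)$ and $\overline{M_2}(G)$ respectively, giving the second inequality.

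Finally I would settle the equality case. If $diam(\overline{G}) \le 2$, then every non-neighbour of every vertex lies at distance exactly $2$, so the vertex-wise bound is an equality for all $u$, forcing equality in both sums. Conversely, if $diam(\overline{G}) \ge 3$ there is a vertex $x$ having some vertex at distance at least $3$, so the vertex-wise bound is strict at $x$; because $\overline{G}$ is connected, $x$ has at least one neighbour and therefore appears on an edge of $\overline{G}$, so the corresponding summand strictly exceeds its lower bound while every other summand is still at least its own, making both inequalities strict.

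The main obstacle I anticipate is exactly the converse direction of the equality statement: strictness of the vertex-wise inequality at some vertex does not by itself make the edge-sum strict, and one must invoke the connectivity of $\overline{G}$ to guarantee that the offending vertex genuinely contributes to the summation. By contrast, the bookkeeping that recognizes $\overline{M_1}(G)$, $\overline{M_2}(G)$ and the non-edge count $\tfrac{n(n-1)}{2}-m$ is routine.
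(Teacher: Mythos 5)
Your proposal is correct and follows the same core route as the paper: both hinge on the vertex-wise estimate $\sigma_{\overline{G}}(u)\ge (n-1)+d_G(u)$ (obtained by counting the $n-1-d_G(u)$ neighbours of $u$ in $\overline{G}$ at distance $1$ and the remaining $d_G(u)$ vertices at distance at least $2$), followed by summing over the edges of $\overline{G}$, i.e.\ the non-edges of $G$, and recognizing $\overline{M_1}(G)$ and $\overline{M_2}(G)$. The only genuine divergence is in the converse of the equality case: the paper picks a pair $u_1,u_2$ with $d_{\overline{G}}(u_1,u_2)\ge 3$, partitions $E(\overline{G})$ into three classes according to whether an edge meets $u_1$ or $u_2$, and recomputes the entire sum to land on $(n-1)[n(n-1)-2m]+d_{\overline{G}}(u_1)+d_{\overline{G}}(u_2)+\overline{M_1}(G)$, a contradiction since connectivity gives $d_{\overline{G}}(u_i)\ge 1$. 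You instead observe that the vertex-wise bound is strict at such a vertex $x$, that connectivity supplies an edge of $\overline{G}$ incident to $x$, and that one strictly larger summand (with all others at least their lower bounds, and positivity handling the product case) already forces strict inequality. Your version is leaner and avoids the paper's bookkeeping (which also silently treats lower bounds on the $\sigma$-values as equalities in its displayed computation); the paper's version has the minor merit of exhibiting the exact surplus $d_{\overline{G}}(u_1)+d_{\overline{G}}(u_2)$. Both are sound.
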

\begin{proof}
	For any vertex $u$ in $\overline{G}$ there are $n-1-d_{G}(u)$ vertices which are at distance $1$ and the remaining $d_{G}(u)$ vertices are at distance at least $2$. Therefore\\
	\begin{eqnarray}\nonumber
	\sigma_{\overline{G}}(u) & \geq & [n-1+ d_{G}(u)]+2 d_{G}(u)\\ \nonumber
	& = & n-1+d_{G}(u).
	\end{eqnarray}
	Therefore,
	\begin{eqnarray}\nonumber
	S_{1}(\overline{G})& = & \sum_{uv \in E(\overline{G})}\left[ \sigma_{\overline{G}}(u)+\sigma_{\overline{G}}(v)\right]\\\nonumber
	& \geq & \sum_{uv \in E(\overline{G})}\left[  n-1+d_{G}(u)+ n-1+d_{G}(v)\right]\\ \nonumber
	& = & \sum_{uv \not \in E(G)}\left[2n-2+d_{G}(u)+d_{G}(v)\right]\\\nonumber
	& = & \left[ \frac{n(n-1)}{2}-m\right](2n-2)+ \sum_{uv \not \in E(G)}[d_{G}(u)+d_{G}(v)]\\\nonumber
	& = & [n(n-1)-2m](n-1)+\overline{M_{1}}(G).
	\end{eqnarray}
	And
	\begin{eqnarray}\nonumber
	S_{2}(\overline{G}) & = & \sum_{uv \in E(\overline{G})}\sigma_{\overline{G}}(u)\sigma_{\overline{G}}(v)\\\nonumber
	& \geq & \sum_{uv \in E(\overline{G})}[n-1+d_{G}(u)][n-1+d_{G}(v)]\\\nonumber
	& = & \sum_{uv \not \in E(G)}\left[(n-1)^2+(n-1)[d_{G}(u)+d_{G}(v)]+[d_{G}(u)d_{G}(v)]\right]\\\nonumber
	& = & \left[\frac{n(n-1)}{2}-m \right](n-1)^2+(n-1)\overline{M_{1}}(G)+\overline{M_{2}}(G).
	\end{eqnarray}
	
	For equality: If the diameter of $\overline{G}$ is $1$ or $2$ then the equality holds.
	
	Conversely, let $S_{1}(\overline{G}) = (n-1)[n(n-1)-2m]+ \overline{M_{1}}(G)$.
	
	Suppose, $diam(\overline{G}) \geq 3$, then there exists at least one pair of vertices, say $u_1$ and $u_2$ such that $d_{\overline{G}}(u_1, u_2) \geq 3$. Therefore $\sigma_{\overline{G}}(u_1) \geq d_{\overline{G}}(u_1) + 3 + 2(n-2-d_{\overline{G}}(u_1)) = n + d_G(u_1)$. Similarly $\sigma_{\overline{G}}(u_2) \geq n + d_G(u_2)$ and for all other vertices $u$ of $\overline{G}$, $\sigma_{\overline{G}}(u) \geq n - 1 +d_G(u)$.
	
	Partition the edge set of $\overline{G}$ into three sets $E_1$, $E_2$ and $E_3$, where
	$E_1 = \{ u_1v \,\,| \,\, \sigma_{\overline{G}}(u_1) \geq n + d_G(u_1) \,\, \text{and} \, \, \sigma_{\overline{G}}(v) \geq n -1 +d_G(v) \}$, $E_2 = \{ u_2v \,\,| \,\, \sigma_{\overline{G}}(u_2) \geq n + d_G(u_2) \,\, \text{and} \, \, \sigma_{\overline{G}}(v) \geq n -1 +d_G(v) \}$ and $E_3 = \{ uv \,\,| \,\, \sigma_{\overline{G}}(u) \geq n - 1 + d_G(u) \,\, \text{and} \, \, \sigma_{\overline{G}}(v) \geq n -1 +d_G(v) \}$. It is easy to check that $|E_1| = d_{\overline{G}}(u_1)$, $|E_2| = d_{\overline{G}}(u_2)$ and $|E_3| = \binom{n}{2} - m - d_{\overline{G}}(u_1) -d_{\overline{G}}(u_2) $.
	
	Therefore
	\begin{eqnarray}\nonumber
	S_{1}(\overline{G})& = & \sum_{uv \in E(\overline{G})}\left[ \sigma_{\overline{G}}(u)+\sigma_{\overline{G}}(v)\right]\\\nonumber
	& = & \sum_{uv \in E_1}\left[ \sigma_{\overline{G}}(u)+\sigma_{\overline{G}}(v)\right] + \sum_{uv \in E_2}\left[ \sigma_{\overline{G}}(u)+\sigma_{\overline{G}}(v)\right] + \sum_{uv \in E_3}\left[ \sigma_{\overline{G}}(u)+\sigma_{\overline{G}}(v)\right] \\ \nonumber
	& = & \sum_{uv \in E_1}\left[2n - 1 + d_G(u) + d_G(v) \right] + \sum_{uv \in E_2}\left[2n - 1 + d_G(u) + d_G(v) \right] \\ \nonumber
	& & +\, \sum_{uv \in E_3}\left[ 2n - 2 + d_G(u) + d_G(v) \right] \\ \nonumber
	& = & (2n-1)d_{\overline{G}}(u_1) + (2n-1)d_{\overline{G}}(u_2) \\ \nonumber
	& & +\, (2n-2)\left[\binom{n}{2} - m - d_{\overline{G}}(u_1) - d_{\overline{G}}(u_2) \right] + \sum_{uv \in E(\overline{G})}\left[d_G(u) + d_G(v) \right] \\ \nonumber
	& = & (n-1)[n(n-1)-2m]+d_{\overline{G}}(u_1) + d_{\overline{G}}(u_2) + \overline{M_{1}}(G),
	\end{eqnarray}
	which is a contradiction. Hence $diam(\overline{G}) \leq 2$.
	
	The equality of $S_2(\overline{G})$ can be proved analogously.
\end{proof}

\section{Status indices and co-indices of some transmission regular graphs}
Status indices of some standard graphs are obtained in \cite{Ram1}.\\

A bijection  $ \alpha $  on $V(G)$ is called an \textit{automorphism} of $G$ if it preserves $E(G)$. In other words, $\alpha$ is an automorphism if  for each $u,v\in V(G)$, $e = uv\in E(G)$ if and only if
$ \alpha (e)=\alpha (u)\alpha (v) \in E(G)$. Let
\[Aut(G)=\{\alpha \mid \alpha:V(G)\to V(G)~~\text{is a bijection, which preserves the adjacency}\}.\]
It is known that $ Aut(G) $  forms a group under the composition of mappings.
A graph $G$ is called \textit{vertex-transitive} if  for
every two vertices $u$ and $v$ of $G$, there exists an automorphism $\alpha$ of $G$ such that $\alpha( u ) = \alpha( v )$. It is known that any vertex-transitive graph is vertex degree regular, transmission regular and self-centred.  Indeed, the graph depicted in Figure 2 is 14-transmission regular graph but
not degree regular and therefore not vertex-transitive (see \cite{Aouchiche2001, Aouchiche2010}).

\begin{figure}[h!]
	\centering
	\includegraphics[width=2.5cm]{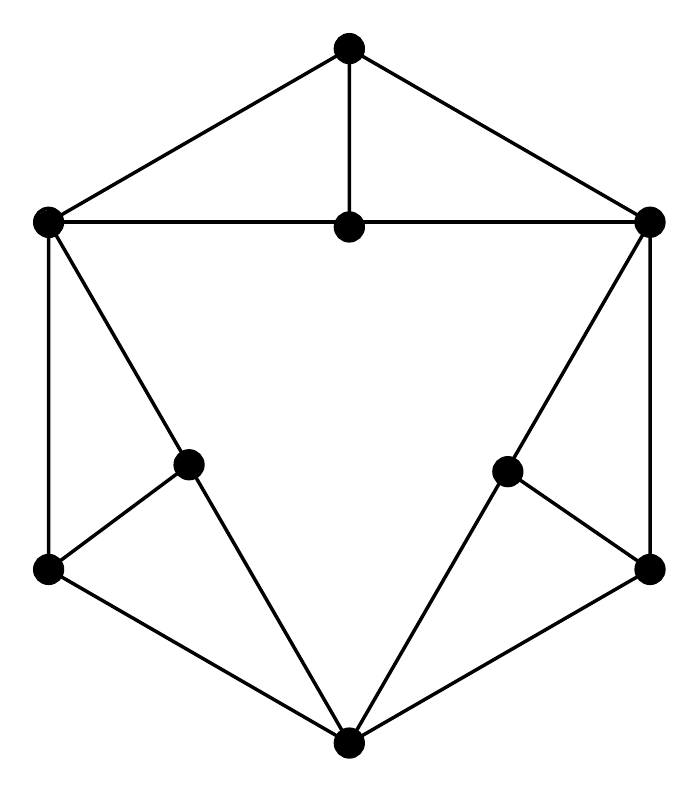}\\
	Figure 2: The transmission regular but not degree regular graph with the smallest order.
	\label{fig:tnd}
\end{figure}

The following is straightforward from the definition of status connectivity indices.
\begin{lem}\label{lem:tr-regular}
	Let $G$ be a connected $k$-transmission regular graph with $m$ edges. Then $S_1(G)=2mk$ and $S_2(G)=mk^2.$
\end{lem}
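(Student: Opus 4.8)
The plan is to substitute the defining property of $k$-transmission regularity directly into the two sums of Eq.~(\ref{Eq1}). First I would record that $G$ being $k$-transmission regular means $\sigma_G(u) = k$ for every vertex $u \in V(G)$, so each status appearing in either sum is the same constant $k$, independent of which vertex is chosen.

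Next, for the first index I would observe that every edge $uv \in E(G)$ contributes $\sigma_G(u) + \sigma_G(v) = k + k = 2k$ to $S_1(G)$; since $G$ has exactly $m$ edges, summing this constant over $E(G)$ gives $S_1(G) = 2mk$. For the second index, each edge contributes $\sigma_G(u)\sigma_G(v) = k \cdot k = k^2$, and summing over the $m$ edges yields $S_2(G) = mk^2$.

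The argument is purely substitution and counting, so I do not expect any genuine obstacle here — this is precisely what the paper means by the lemma being ``straightforward from the definition.'' The only point I would make explicit is that both identities rely solely on the constancy of $\sigma_G$ together with $|E(G)| = m$, and invoke no further structural feature of $G$; in particular the result applies uniformly to every vertex-transitive graph, which by the preceding discussion is automatically transmission regular.
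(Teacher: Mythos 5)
Your proof is correct and is exactly the direct substitution argument the paper intends when it calls the lemma ``straightforward from the definition'' (the paper itself supplies no written proof). Nothing is missing.
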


\begin{thm}[\cite{Ashrafi-polyhex}]\label{thm:orbit-w}
	Let $G$ be a connected graph on $n$ vertices with the automorphism group $Aut(G)$ and the vertex set
	$V(G)$. Let $V_1, V_2,\cdots , V_t$ be all orbits of the action $Aut(G)$ on $V(G)$. Suppose that for each
	$1\leq i \leq t$, $k_i$ are the transmission of vertices in the orbit $V_i$,
	respectively. Then
	\[W(G)=\frac{1}{2}\sum_{i=1}^{t} |V_i|k_i.\]
	Specially if $G$ is vertex-transitive (i.e., $t=1$), then $W(G)=\frac{1}{2}nk$, where $k$ is the transmission of each vertex of $G$ respectively.
\end{thm}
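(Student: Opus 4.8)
The plan is to reduce the computation of $W(G)$ to a sum over orbits by exploiting the fact that the status function is invariant under automorphisms. The starting point is the identity $W(G) = \frac{1}{2}\sum_{u \in V(G)} \sigma_G(u)$ recorded in the introduction, so everything comes down to evaluating this sum.

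First I would establish the key claim that automorphisms preserve status, that is, $\sigma_G(\alpha(u)) = \sigma_G(u)$ for every $\alpha \in Aut(G)$ and every $u \in V(G)$. This rests on the standard fact that any automorphism is an isometry: since $\alpha$ preserves adjacency and is a bijection, it maps shortest paths to shortest paths, whence $d_G(\alpha(u), \alpha(v)) = d_G(u, v)$ for all $u, v$. Then, writing $\sigma_G(\alpha(u)) = \sum_{w \in V(G)} d_G(\alpha(u), w)$ and reindexing the sum through the bijection $w = \alpha(v)$, I obtain $\sigma_G(\alpha(u)) = \sum_{v \in V(G)} d_G(\alpha(u), \alpha(v)) = \sum_{v \in V(G)} d_G(u, v) = \sigma_G(u)$.

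The immediate consequence is that the status is constant on each orbit $V_i$: any two vertices in the same orbit differ by an automorphism, so they have equal status, and this common value is precisely the number $k_i$ in the statement (this also shows $k_i$ is well defined). Since the orbits $V_1, \ldots, V_t$ partition $V(G)$, I would split the sum accordingly:
\[ W(G) = \frac{1}{2}\sum_{u \in V(G)} \sigma_G(u) = \frac{1}{2}\sum_{i=1}^{t}\sum_{u \in V_i}\sigma_G(u) = \frac{1}{2}\sum_{i=1}^{t} |V_i| k_i, \]
using $\sigma_G(u) = k_i$ on $V_i$ in the last step. The vertex-transitive case is the specialization $t = 1$, where the single orbit $V_1 = V(G)$ satisfies $|V_1| = n$ and $k_1 = k$, yielding $W(G) = \frac{1}{2} n k$.

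There is no serious obstacle here; the only point requiring care is justifying that an automorphism is an isometry and therefore preserves status, which is exactly what makes the orbit transmissions $k_i$ well defined. Once that invariance is in hand, the result is merely a regrouping of the known expression for $W(G)$ over the orbit partition.
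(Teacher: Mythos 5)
Your proof is correct. Note that the paper itself gives no proof of this theorem --- it is quoted as a known result from the cited reference on the Wiener index of nanostructures --- so there is nothing in the text to compare against; your argument (automorphisms are isometries, hence the status $\sigma_G$ is constant on each orbit, and the identity $W(G)=\frac{1}{2}\sum_{u\in V(G)}\sigma_G(u)$ then splits over the orbit partition) is the standard one and correctly fills in the omitted details, including the point that the orbit transmissions $k_i$ are well defined.
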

Analogous to Theorem \ref{thm:orbit-w} and as a consequence of Proposition \ref{Prop2.1}, we have the following
\begin{thm}\label{thm:orbit}
	Let $G$ be a connected graph on $n$ vertices with the automorphism group $Aut(G)$ and the vertex set
	$V(G)$. Let $V_1, V_2,\cdots , V_t$ be all orbits of the action $Aut(G)$ on $V(G)$. Suppose that for each
	$1\leq i \leq t$, $d_i$ and $k_i$ are the vertex degree and the transmission of vertices in the orbit $V_i$,
	respectively. Then
	\[S_1(G)=\sum_{i=1}^{t} |V_i|d_ik_i, \quad \overline{S_1}(G)=(n-1)\sum_{i=1}^{t} \Big(|V_i|k_i(1-\frac{d_i}{n-1})\Big).\]
	Specially if $G$ is vertex-transitive (i.e., $t=1$), then
	\[S_1(G)=ndk,\quad S_2(G)=\frac{1}{2}ndk^2,\]
	\[\overline{S_1}(G)=2\binom{n}{2}k-ndk,\quad \overline{S_2}(G)=\Big(\binom{n}{2}-\frac{nd}{2}\Big)k^2,\]
	where $d$ and $k$ are the degree and the transmission of each vertex of $G$ respectively.
\end{thm}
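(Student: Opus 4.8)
The plan is to prove the general orbit formulas first and then specialize to the vertex-transitive case. The key observation driving everything is that the status $\sigma_G$ is an invariant of the automorphism group: if two vertices lie in the same orbit $V_i$, then there is an automorphism carrying one to the other, and since automorphisms preserve distances, all vertices in $V_i$ share a common transmission value $k_i$ and a common degree $d_i$. This is exactly the hypothesis as stated, so I may take it as given.

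For the first formula, I would start from the definition $S_1(G)=\sum_{uv\in E(G)}[\sigma_G(u)+\sigma_G(v)]$ and rewrite it as a sum over vertices. The standard edge-to-vertex conversion gives
\[
S_1(G)=\sum_{uv\in E(G)}[\sigma_G(u)+\sigma_G(v)]=\sum_{u\in V(G)}d_G(u)\,\sigma_G(u),
\]
since each vertex $u$ contributes its status $\sigma_G(u)$ once for each of its $d_G(u)$ incident edges. Now I partition $V(G)=\bigsqcup_{i=1}^t V_i$ into orbits and use that $d_G(u)=d_i$ and $\sigma_G(u)=k_i$ are constant on $V_i$, yielding $S_1(G)=\sum_{i=1}^t |V_i|\,d_i k_i$, as required. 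For the co-index formula I would invoke Proposition \ref{Prop2.1}, which gives $\overline{S_1}(G)=2(n-1)W(G)-S_1(G)$, and substitute the orbit expression for $W(G)$ from Theorem \ref{thm:orbit-w}, namely $W(G)=\tfrac12\sum_{i=1}^t|V_i|k_i$, together with the formula for $S_1(G)$ just derived. Factoring out $(n-1)$ then produces
\[
\overline{S_1}(G)=(n-1)\sum_{i=1}^t|V_i|k_i-\sum_{i=1}^t|V_i|d_ik_i
=(n-1)\sum_{i=1}^t|V_i|k_i\Bigl(1-\tfrac{d_i}{n-1}\Bigr),
\]
which matches the stated identity.

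For the vertex-transitive specialization ($t=1$, with common degree $d$ and status $k$), the graph is $k$-transmission regular, so I can apply Lemma \ref{lem:tr-regular} directly. That lemma gives $S_1(G)=2mk$ and $S_2(G)=mk^2$; combining with the handshake identity $2m=nd$ (valid since every vertex has degree $d$) immediately yields $S_1(G)=ndk$ and $S_2(G)=\tfrac12 ndk^2$. The two co-index formulas then follow by specializing Proposition \ref{Prop2.1}: for the first status co-index, $W(G)=\tfrac12 nk$ gives $\overline{S_1}(G)=2(n-1)\cdot\tfrac12 nk-ndk=n(n-1)k-ndk=2\binom{n}{2}k-ndk$. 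For the second, I use $\overline{S_2}(G)=2(W(G))^2-\tfrac12\sum_u\sigma_G(u)^2-S_2(G)$, where $\sum_u\sigma_G(u)^2=nk^2$ by regularity, so $\overline{S_2}(G)=2(\tfrac{nk}{2})^2-\tfrac12 nk^2-\tfrac12 ndk^2=\tfrac{n^2k^2}{2}-\tfrac{nk^2}{2}-\tfrac{ndk^2}{2}=\bigl(\binom{n}{2}-\tfrac{nd}{2}\bigr)k^2$.

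I do not anticipate a serious obstacle, since the whole argument reduces to bookkeeping once the orbit-invariance of status and degree is in hand; the only point requiring a little care is the edge-to-vertex rewriting of $S_1$ and the clean substitution of $W(G)$ into Proposition \ref{Prop2.1}, ensuring the factor $(n-1)$ is pulled out correctly in the co-index formula. The vertex-transitive formulas are then purely a matter of setting $t=1$ and applying $2m=nd$, so the most error-prone part will simply be verifying the arithmetic in the $\overline{S_2}$ computation, where the three terms must combine to the stated binomial expression.
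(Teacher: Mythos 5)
Your proof is correct and follows exactly the route the paper intends: the paper gives no explicit proof, presenting the theorem as a direct consequence of Proposition \ref{Prop2.1}, Lemma \ref{lem:tr-regular} and Theorem \ref{thm:orbit-w}, which is precisely the chain of substitutions you carry out (the edge-to-vertex rewriting $S_1(G)=\sum_{u}d_G(u)\sigma_G(u)$ plus orbit-invariance of degree and transmission). All the arithmetic, including the $\overline{S_2}$ computation, checks out.
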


The following is a direct consequence of Proposition \ref{Prop2.1}, Lemma \ref{lem:tr-regular} and Theorem \ref{thm:orbit-w}.
\begin{lem}\label{lem:tr-regular-coindex}
	Let $G$ be a connected $k$-transmission regular graph with $m$ edges. Then $\overline{S_1}(G)=2\binom{n}{2}k-2mk$ and
	$\overline{S_2}(G)=\binom{n}{2}k^2-mk^2$.
\end{lem}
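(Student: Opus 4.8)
The plan is to derive both identities directly from the results established earlier in the excerpt, treating the $k$-transmission regular hypothesis as the simplifying special case. Since $G$ is $k$-transmission regular, every vertex satisfies $\sigma_G(u)=k$, so in particular $\sum_{u\in V(G)}\sigma_G(u)=nk$. By the definition of the Wiener index, this gives $W(G)=\tfrac12 nk$, which is precisely the specialization of Theorem \ref{thm:orbit-w} to the vertex-transitive ($t=1$) case, and it will also follow from Theorem \ref{thm:orbit-w} once one observes that a $k$-transmission regular graph need not be vertex-transitive but still has all transmissions equal to $k$.

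First I would establish the formula for $\overline{S_1}(G)$. I would invoke Proposition \ref{Prop2.1}, which states $\overline{S_1}(G)=2(n-1)W(G)-S_1(G)$. Substituting $W(G)=\tfrac12 nk$ gives the first term as $(n-1)nk=2\binom{n}{2}k$, using $\binom{n}{2}=\tfrac{n(n-1)}{2}$. For the second term I would use Lemma \ref{lem:tr-regular}, which gives $S_1(G)=2mk$. Combining these yields $\overline{S_1}(G)=2\binom{n}{2}k-2mk$, as claimed.

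Next I would handle $\overline{S_2}(G)$ using the second identity of Proposition \ref{Prop2.1}, namely $\overline{S_2}(G)=2(W(G))^2-\tfrac12\sum_{u\in V(G)}(\sigma_G(u))^2-S_2(G)$. The regularity makes each piece explicit: $2(W(G))^2=2\cdot\tfrac14 n^2 k^2=\tfrac12 n^2 k^2$; the sum of squared transmissions is $\sum_{u}(\sigma_G(u))^2=nk^2$, so $\tfrac12\sum_u(\sigma_G(u))^2=\tfrac12 nk^2$; and $S_2(G)=mk^2$ by Lemma \ref{lem:tr-regular}. Assembling these gives $\overline{S_2}(G)=\tfrac12 n^2k^2-\tfrac12 nk^2-mk^2=\tfrac12 n(n-1)k^2-mk^2=\binom{n}{2}k^2-mk^2$, which is the desired formula.

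I expect no genuine obstacle here: the statement is explicitly flagged as a direct consequence of results already in hand, so the entire argument is a matter of substituting the three regularity-driven quantities ($W(G)=\tfrac12 nk$, $\sum_u\sigma_G(u)^2=nk^2$, and the values of $S_1,S_2$ from Lemma \ref{lem:tr-regular}) into the two identities of Proposition \ref{Prop2.1} and simplifying with $\binom{n}{2}=\tfrac{n(n-1)}{2}$. The only point requiring a moment of care is confirming that all $n$ transmissions are genuinely equal to $k$ under the $k$-transmission regular hypothesis, so that $\sum_u\sigma_G(u)=nk$ and $\sum_u\sigma_G(u)^2=nk^2$ hold without appeal to vertex-transitivity; this is immediate from the definition of $k$-transmission regular given in the introduction.
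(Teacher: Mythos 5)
Your proposal is correct and follows exactly the route the paper indicates: it presents the lemma as a direct consequence of Proposition \ref{Prop2.1}, Lemma \ref{lem:tr-regular}, and the relation $W(G)=\tfrac12 nk$, which is precisely your substitution argument. The computations check out, and your remark that transmission regularity (not vertex-transitivity) is all that is needed for $\sum_u\sigma_G(u)=nk$ and $\sum_u\sigma_G(u)^2=nk^2$ is a correct and worthwhile clarification.
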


Following \cite{Harary-book} we recall intersection graphs as follows. Let $S$ be a set and $F=\{S_1,\cdots,S_q\}$ be a
non-empty family of distinct non-empty subsets of $S$ such that $S=\bigcup_{i=1}^{q}S_i$. The intersection
graph of $S$ which is denoted by $\Omega(F)$ has $F$ as its set of vertices and two distinct vertices
$S_i$, $S_j$ , $i\neq j$ , are adjacent if and only if $S_i\bigcap S_j\neq \emptyset$.
Here we will consider a set $S$ of cardinality $p$ and let $F$ be the set of all subsets of
$S$ of cardinality $ t $, $1 < t <p$, which is denoted by $S^{ \{t\} }$. Upon convenience we may set
$S = \{1, 2,\cdots,p\}$. Let us denote the intersection graph $\Omega(S^{\{ t \}})$
by $\Gamma^{ \{t\} } = (V,E)$.
The number of vertices of this graph is $|V|=\binom{p}{ t }$, the
degree $d$ of each vertex is as follows:
\[
d=\left\{
\begin{array}{ll}
\binom{p}{ t }- \binom{p- t }{ t }-1, & \hbox{$p\ge  2t $;} \\[2mm]
\binom{p}{ t }-1, & \hbox{$p< 2t $.}
\end{array}
\right.
\]
The number of its edges is as follows:
\[
|E|=\left\{
\begin{array}{ll}
\frac{1}{2}\binom{p}{ t }(\binom{p}{ t }- \binom{p- t }{ t }-1), & \hbox{$p\ge  2t $;} \\[3mm]
\frac{1}{2}\binom{p}{ t }(\binom{p}{ t }-1), & \hbox{$p< 2t $.}
\end{array}
\right.
\]

\begin{lem}[{\cite{DarafsheHypercube}}]\label{lem:intersect}
	The intersection graph $\Gamma^{\{t\}}$ is vertex-transitive and for any $t$-element subset $A$ of $S$ we have
	
	\[
	\sigma_{\Gamma^{ \{t\}}}(A)=
	\left\{
	\begin{array}{ll}
	\binom{p}{ t }+\binom{p- t }{ t }-1, & \hbox{$p\ge  2t $;} \\[2mm]
	\binom{p}{ t }-1, & \hbox{$p< 2t $.}
	\end{array}
	\right.
	\]
	Moreover,
	\[
	W(\Gamma^{ \{t\}})=
	\left\{
	\begin{array}{ll}
	\frac{1}{2}\binom{p}{ t }\Big(\binom{p}{ t }+\binom{p- t }{ t }-1\Big), & \hbox{$p\ge  2t $;} \\[2mm]
	\frac{1}{2}\binom{p}{ t }\Big(\binom{p}{ t }-1\Big), & \hbox{$p< 2t $.}
	\end{array}
	\right.
	\]
\end{lem}

\begin{thm}
	\[
	S_1(\Gamma^{ \{t\} })=
	\left\{
	\begin{array}{ll}
	\binom{p}{ t }\Big(\binom{p}{ t }- \binom{p- t }{ t }-1\Big)\Big(\binom{p}{ t }+\binom{p - t }{ t }-1\Big), & \hbox{$p\ge  2t $;} \\[3mm]
	\binom{p}{ t }\Big(\binom{p}{ t }-1\Big)^2, &   \hbox{$p< 2t $.}
	\end{array}
	\right.
	\]
	
	\[
	S_2(\Gamma^{ \{t\} })=
	\left\{
	\begin{array}{ll}
	\frac{1}{2}\binom{p}{ t }\Big(\binom{p}{ t }- \binom{p- t }{ t }-1\Big)\Big(\binom{p}{ t }+\binom{p - t }{ t }-1\Big)^2, & \hbox{$p\ge  2t $;} \\[3mm]
	\frac{1}{2}\binom{p}{ t }\Big(\binom{p}{ t }-1\Big)^3, &   \hbox{$p< 2t $.}
	\end{array}
	\right.
	\]
	\[
	\overline{S_1}(\Gamma^{ \{t\} })=
	\left\{
	\begin{array}{ll}
	\binom{p- t }{ t }\binom{p}{ t }\Big(\binom{p}{ t }+\binom{p - t }{ t }-1\Big), & \hbox{$p\ge  2t $;} \\[3mm]
	2\dbinom{\binom{p}{ t }}{ 2 }(\binom{p}{ t }-1) -\binom{p}{ t }\Big(\binom{p}{ t }-1\Big)^2, &   \hbox{$p< 2t $.}
	\end{array}
	\right.
	\]
	\[
	\overline{S_2}(\Gamma^{ \{t\} })=
	\left\{
	\begin{array}{ll}
	\Big(\dbinom{\binom{p}{ t }}{2}- \frac{1}{2}\binom{p}{ t }(\binom{p}{ t }-\binom{p - t }{ t }-1)\Big)
	\Big( \binom{p}{t}+\binom{p-t}{t}-1\Big)^2 & \hbox{$p\ge  2t $;}\\[3mm]
	\Big(\dbinom{\binom{p}{t}}{2}- \frac{1}{2}\binom{p}{t}(\binom{p}{ t }-1)\Big)
	\Big( \binom{p}{t}-1\Big)^2, &   \hbox{$p< 2t $.}
	\end{array}
	\right.
	\]
\end{thm}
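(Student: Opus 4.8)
The plan is to use the fact, recorded in Lemma~\ref{lem:intersect}, that $\Gamma^{\{t\}}$ is vertex-transitive; hence it is $k$-transmission regular with $k=\sigma_{\Gamma^{\{t\}}}(A)$ the common transmission of every vertex. This lets me invoke the vertex-transitive specializations of Theorem~\ref{thm:orbit}, namely $S_1=ndk$, $S_2=\tfrac12 ndk^2$, $\overline{S_1}=2\binom{n}{2}k-ndk$, and $\overline{S_2}=\big(\binom{n}{2}-\tfrac{nd}{2}\big)k^2$ (equivalently Lemmas~\ref{lem:tr-regular} and~\ref{lem:tr-regular-coindex}, since $m=\tfrac{nd}{2}$), so that the whole problem reduces to pinning down the three numerical parameters $n$, $d$, $k$ in each regime.

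First I would read off the parameters. In both cases $n=\binom{p}{t}$. The degree $d$ is the formula quoted just before Lemma~\ref{lem:intersect} and $k$ is supplied by Lemma~\ref{lem:intersect}: for $p\ge 2t$, $d=\binom{p}{t}-\binom{p-t}{t}-1$ and $k=\binom{p}{t}+\binom{p-t}{t}-1$; for $p<2t$, $d=\binom{p}{t}-1=n-1$ and $k=\binom{p}{t}-1=n-1$. In the latter regime $\Gamma^{\{t\}}\cong K_n$, since any two $t$-subsets of a $p$-set with $p<2t$ must intersect by pigeonhole, which is a convenient sanity check.

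Then I would substitute. The formulas $S_1=ndk$ and $S_2=\tfrac12 ndk^2$ are plain products and reproduce the stated expressions on sight. For the co-indices the one quantity worth isolating is $n-1-d$, the number of non-neighbors of a fixed vertex; for $p\ge 2t$ this collapses to $\binom{p}{t}-1-\big(\binom{p}{t}-\binom{p-t}{t}-1\big)=\binom{p-t}{t}$, which is exactly the number of $t$-subsets disjoint from a given one. Writing $\overline{S_1}=nk(n-1-d)$ then gives $\binom{p-t}{t}\binom{p}{t}\big(\binom{p}{t}+\binom{p-t}{t}-1\big)$, and the parallel substitution into $\overline{S_2}=\big(\binom{n}{2}-\tfrac{nd}{2}\big)k^2$ yields the remaining formula. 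For $p<2t$ I would simply leave the Theorem~\ref{thm:orbit} expressions unexpanded to match the stated forms; note they both vanish, as they must for a complete graph.

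There is no genuinely hard step here: every line is a substitution into Theorem~\ref{thm:orbit}. The only thing demanding care is the binomial bookkeeping and the identity $n-1-d=\binom{p-t}{t}$ for $p\ge 2t$, so I would keep the two cases $p\ge 2t$ and $p<2t$ rigorously separated throughout to avoid the single real pitfall.
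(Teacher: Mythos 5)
Your proposal is correct and follows exactly the route the paper takes: the paper's entire proof is the one-line observation that the theorem is a direct consequence of Theorem \ref{thm:orbit} and Lemma \ref{lem:intersect}, i.e.\ substituting $n=\binom{p}{t}$ and the values of $d$ and $k$ into the vertex-transitive formulas. Your added checks (the identity $n-1-d=\binom{p-t}{t}$ for $p\ge 2t$ and the observation that $\Gamma^{\{t\}}\cong K_n$ when $p<2t$) are sound and merely make explicit the bookkeeping the paper leaves to the reader.
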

\begin{proof}
	It is a direct consequence of Theorem \ref{thm:orbit} and Lemma \ref{lem:intersect}.
\end{proof}

The vertex set of the hypercube $H_n$ consists of all $n$-tuples $(b_1,b_2,\cdots,b_n)$ with
$b_i \in \{0, 1\}$. Two vertices are adjacent if the corresponding tuples differ in precisely
one place. Moreover, $H_n$ has exactly $2n$
vertices and $n2^{n-1}$ edges. Darafsheh \cite{DarafsheHypercube} proved that $H_n$ is vertex-transitive  and for every vertex $u$, $\sigma_{H_n}(u) = n2^{n-1}$. Therefore, by Lemmas \ref{lem:tr-regular} and \ref{lem:tr-regular-coindex} we have following result.

\begin{thm}
	For hypercube $H_{n}$
	\begin{equation}\label{eq:hypercube}\nonumber
	S_1(G)=n^22^{2n-1} ~~\text{and}~~ S_2(G)=n^32^{3n-3},
	\end{equation}
	
	\begin{equation}\label{eq:hypercube-co}\nonumber
	\overline{S_1}(G)=2n^22^{n-1}(2n-5)~~\text{and}~~ \overline{S_2}(G)=n^22^{2n-2}(n(2n-1)-1).
	\end{equation}
\end{thm}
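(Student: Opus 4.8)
The plan is to exploit the fact, recalled immediately before the statement, that $H_n$ is vertex-transitive and hence $k$-transmission regular with $k=\sigma_{H_n}(u)=n2^{n-1}$. All four quantities then follow by substitution into Lemma \ref{lem:tr-regular} (for $S_1,S_2$) and Lemma \ref{lem:tr-regular-coindex} (for $\overline{S_1},\overline{S_2}$), so no fresh distance computation is needed --- only the assembly of the three parameters: vertex count $N=2^n$, edge count $m=n2^{n-1}$, and common transmission $k=n2^{n-1}$.

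For the connectivity indices the work is purely mechanical. Noting the coincidence $m=k=n2^{n-1}$, Lemma \ref{lem:tr-regular} gives $S_1(H_n)=2mk=2\big(n2^{n-1}\big)^2$ and $S_2(H_n)=mk^2=\big(n2^{n-1}\big)^3$; collecting the powers of two yields $n^2 2^{2n-1}$ and $n^3 2^{3n-3}$. For the co-indices I would write, from Lemma \ref{lem:tr-regular-coindex}, $\overline{S_1}(H_n)=2\big(\binom{N}{2}-m\big)k$ and $\overline{S_2}(H_n)=\big(\binom{N}{2}-m\big)k^2$. The one step that is not automatic is the evaluation of the binomial coefficient on $N=2^n$ vertices, namely $\binom{2^n}{2}=2^{n-1}(2^n-1)$, which produces the clean factorization $\binom{2^n}{2}-m=2^{n-1}(2^n-1-n)$; the remaining algebra is routine. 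As an independent route and a cross-check, one can instead feed $W(H_n)=\tfrac{1}{2}Nk$ into Proposition \ref{Prop2.1}.

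The main obstacle is therefore not conceptual but notational: the letter $n$ simultaneously denotes the dimension of the cube and, through $N=2^n$, drives the vertex count that enters the binomial coefficient of Lemma \ref{lem:tr-regular-coindex}. The likeliest source of error is conflating $\binom{2^n}{2}$ with $\binom{2n}{2}$ or $\binom{n}{2}$ when simplifying the co-indices, so before committing to the final closed forms I would verify them by evaluating both the structural expression $2\big(\binom{2^n}{2}-m\big)k$ and the claimed expression at a small value such as $n=3$.
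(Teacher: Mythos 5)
Your method is exactly the paper's: it cites Darafsheh for vertex-transitivity and $\sigma_{H_n}(u)=n2^{n-1}$ and then invokes Lemma \ref{lem:tr-regular} and Lemma \ref{lem:tr-regular-coindex} with $m=n2^{n-1}$, so for $S_1$ and $S_2$ you and the paper coincide and both formulas check out. The interesting point is the pitfall you flag yourself --- conflating $\binom{2^n}{2}$ with $\binom{2n}{2}$ --- is precisely the error the published statement commits (note the paper even asserts that $H_n$ has ``$2n$ vertices''). Carrying out your computation correctly with $N=2^n$ gives
\[
\overline{S_1}(H_n)=2\Big(\tbinom{2^n}{2}-n2^{n-1}\Big)n2^{n-1}=n2^{2n-1}\big(2^n-n-1\big),
\qquad
\overline{S_2}(H_n)=n^2 2^{3n-3}\big(2^n-n-1\big),
\]
which at $n=3$ (where $\binom{8}{2}-12=16$ non-edges each have status sum $24$ and status product $144$) gives $384$ and $2304$; the theorem's stated expressions evaluate to $72$ and $2016$ and so cannot be derived by this (or any correct) argument. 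So your proof proposal is sound and would establish the two connectivity-index formulas as written, but for the co-indices it proves corrected formulas rather than the ones in the statement; the $n=3$ sanity check you propose is exactly what detects the discrepancy.
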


The \textit{Kneser graph} $KG_{p,k}$ is the graph whose vertices correspond to the $k$-element subsets of
a set of $p$ elements, and where two vertices are adjacent if and only if the two corresponding sets are
disjoint. Clearly we must impose the restriction $p \ge 2k$. The Kneser graph $KG_{p,k}$ has
$\binom{p}{k}$ vertices and it is regular of degree $\binom{p-k}{k}$. Therefore the number of edges
of $KG_{p,k}$ is $\frac{1}{2}\binom{p}{k}\binom{p-k}{k}$
(see \cite{darafshKneser}).
The Kneser graph $KG_{n,1}$ is the \textit{complete graph} on $n$ vertices. The Kneser graph $KG_{2p-1,p-1}$ is known as
the \textit{odd graph} $O_p$. The odd graph $O_3 = KG_{5,2}$ is isomorphic to the \textit{Petersen graph} (see Figure 3).

\begin{figure}[h!]\label{fig:peterson}
	\centering
	\includegraphics[width=4.6cm]{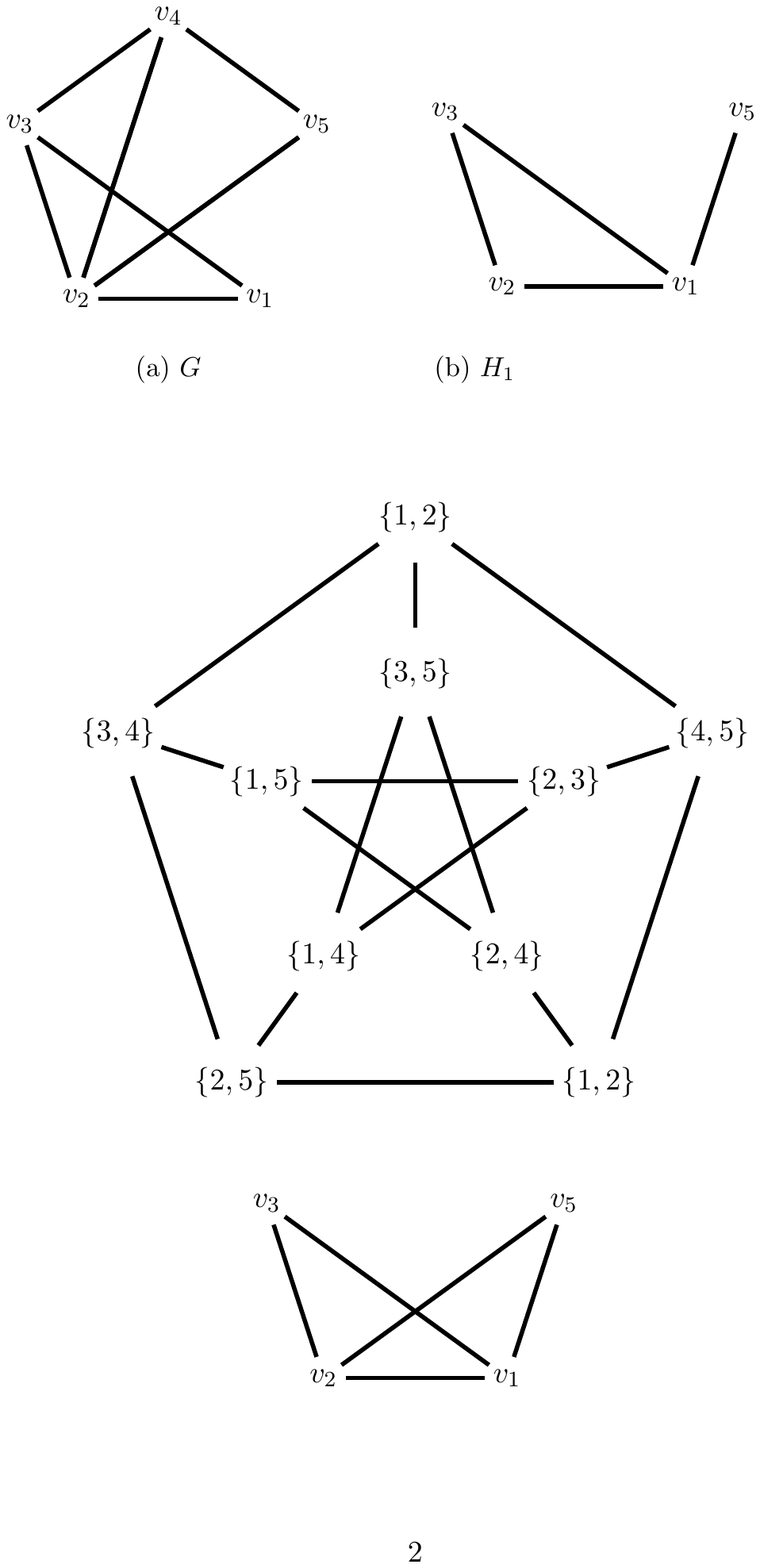}\\
	Figure 3: The odd graph $O_3 = KG_{5,2}$ is isomorphic to the \textit{Petersen graph}
		\label{}
\end{figure}

\begin{lem}[{\protect{\cite{darafshKneser}}}]\label{lem:Kneser}
	The Kneser graph $KG_{p,k}$ is vertex-transitive and for each $k$-subset $A$, $\sigma_{KG_{p,k}}(A)=\frac{2W(KG_{p,k})}{\binom{p}{k}}$.
\end{lem}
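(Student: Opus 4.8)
The plan is to prove the two assertions in turn: first that $KG_{p,k}$ is vertex-transitive, and then to extract the common status value directly from the Wiener index by means of the identity $W(G)=\frac{1}{2}\sum_{u\in V(G)}\sigma_G(u)$ recorded in the introduction. Throughout I work under the standing assumption that $KG_{p,k}$ is connected, which is what makes $\sigma_{KG_{p,k}}$ and $W(KG_{p,k})$ meaningful.

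For vertex-transitivity I would realize the symmetries explicitly through the action of the symmetric group $S_p$ on the ground set $[p]=\{1,2,\dots,p\}$. Each permutation $\pi\in S_p$ sends a $k$-subset $A$ to the $k$-subset $\pi(A)=\{\pi(a):a\in A\}$, and the decisive point is that this map preserves adjacency: because $\pi$ is a bijection of $[p]$, one has $A\cap B=\emptyset$ if and only if $\pi(A)\cap\pi(B)=\emptyset$, so $\pi$ induces an element of $Aut(KG_{p,k})$. Transitivity is then immediate --- given two $k$-subsets $A$ and $B$, fix any bijection of $A$ onto $B$ and extend it to a bijection of $[p]\setminus A$ onto $[p]\setminus B$; the resulting $\pi\in S_p$ satisfies $\pi(A)=B$. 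Hence $Aut(KG_{p,k})$ acts transitively on the vertex set and $KG_{p,k}$ is vertex-transitive.

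Once vertex-transitivity is established, the status formula follows with essentially no extra computation, since (as recalled in the text preceding this lemma) every vertex-transitive graph is transmission regular. Concretely, if $\alpha$ is an automorphism with $\alpha(A)=B$, then using that automorphisms preserve distances and reindexing the sum by $C\mapsto\alpha^{-1}(C)$ gives $\sigma_{KG_{p,k}}(B)=\sum_{C}d(\alpha(A),C)=\sum_{C}d(A,\alpha^{-1}(C))=\sigma_{KG_{p,k}}(A)$, so the status is a single constant, say $\sigma$, over all $k$-subsets. Feeding this into $W(KG_{p,k})=\frac{1}{2}\sum_{A}\sigma_{KG_{p,k}}(A)=\frac{1}{2}\binom{p}{k}\sigma$ and solving for $\sigma$ yields $\sigma=\frac{2W(KG_{p,k})}{\binom{p}{k}}$, which is exactly the asserted value.

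I do not anticipate any genuine obstacle: the argument is a routine unwinding of definitions and does not depend on the precise values of $p$ and $k$ beyond connectivity. The only places asking for a moment's care are the adjacency-preservation check (disjointness of subsets is literally invariant under any permutation of $[p]$) and the distance-invariance of automorphisms used to conclude transmission regularity; both are standard. It is perhaps worth flagging that connectivity --- and hence finiteness of $\sigma_{KG_{p,k}}$ --- requires $p\ge 2k+1$ rather than merely $p\ge 2k$, the case $p=2k$ giving a disjoint union of edges.
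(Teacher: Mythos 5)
Your argument is correct, and it is the standard one. Note, however, that the paper itself offers no proof of this lemma: it is imported verbatim from the cited reference of Mohammadyari and Darafsheh, so there is no internal proof to compare against. What you have written is essentially the expected derivation --- the induced action of $S_p$ on $k$-subsets preserves disjointness and acts transitively, automorphisms preserve distances and hence transmissions, and then $W(KG_{p,k})=\frac{1}{2}\binom{p}{k}\sigma$ inverts to give the stated value. This last step is exactly the vertex-transitive case ($t=1$) of the paper's Theorem \ref{thm:orbit-w}, so your route is fully consistent with the machinery the paper already sets up; you have simply made explicit what the citation hides. Your closing caveat is also well taken and worth keeping: the paper only imposes $p\ge 2k$, but for $p=2k$ the Kneser graph is a perfect matching on $\binom{2k}{k}>2$ vertices and hence disconnected, so the transmissions are undefined; the lemma genuinely requires $p\ge 2k+1$ (together with the degenerate convention for $k=1$, where $KG_{p,1}=K_p$ and everything is trivially fine).
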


Following Proposition follows from Lemma \ref{lem:Kneser} and Lemma \ref{lem:tr-regular}.

\begin{prop}\label{Prop3.4}
	For a Kneser graph $KG_{p,k}$ we have
	\[S_1(KG_{p,k})=2W(KG_{p,k})\binom{p-k}{k} \]
	and \[S_2(KG_{p,k})=\binom{p-k}{k}\left[\frac{2(W(KG_{p,k}))^2}{\binom{p}{k}}\right].\]
\end{prop}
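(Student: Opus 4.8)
The plan is to invoke the two structural facts about the Kneser graph $KG_{p,k}$ recorded just before the statement, namely Lemma \ref{lem:Kneser} and Lemma \ref{lem:tr-regular}. The key observation is that Lemma \ref{lem:Kneser} tells us $KG_{p,k}$ is vertex-transitive, and therefore in particular transmission regular: every vertex has the same status $k' := \sigma_{KG_{p,k}}(A) = \frac{2W(KG_{p,k})}{\binom{p}{k}}$. This places us squarely in the hypothesis of Lemma \ref{lem:tr-regular}, which gives closed forms $S_1(G) = 2mk'$ and $S_2(G) = m(k')^2$ for any $k'$-transmission regular graph with $m$ edges.

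First I would record the two graph parameters of $KG_{p,k}$ supplied in the surrounding text: the number of edges is $m = \frac{1}{2}\binom{p}{k}\binom{p-k}{k}$, and the common transmission is $k' = \frac{2W(KG_{p,k})}{\binom{p}{k}}$. Next I would substitute these into Lemma \ref{lem:tr-regular}. For the first index,
\[
S_1(KG_{p,k}) = 2mk' = 2\cdot\tfrac{1}{2}\binom{p}{k}\binom{p-k}{k}\cdot\frac{2W(KG_{p,k})}{\binom{p}{k}} = 2W(KG_{p,k})\binom{p-k}{k},
\]
where the factor $\binom{p}{k}$ cancels cleanly. For the second index,
\[
S_2(KG_{p,k}) = m(k')^2 = \tfrac{1}{2}\binom{p}{k}\binom{p-k}{k}\cdot\frac{4\big(W(KG_{p,k})\big)^2}{\binom{p}{k}^2} = \binom{p-k}{k}\left[\frac{2\big(W(KG_{p,k})\big)^2}{\binom{p}{k}}\right],
\]
again after cancelling one factor of $\binom{p}{k}$. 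These reproduce exactly the two displayed formulas in the statement.

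Honestly, there is no substantial obstacle here: the proposition is a direct specialization of a general lemma for transmission regular graphs, with the only content being the arithmetic bookkeeping of plugging in $m$ and $k'$ and simplifying the binomial factors. The one point worth stating explicitly, so the reader is not left to infer it, is the chain vertex-transitive $\Rightarrow$ transmission regular, which is precisely what licenses the use of Lemma \ref{lem:tr-regular}; this is the step I would flag. Everything else is routine substitution, so the proof can be quite short.
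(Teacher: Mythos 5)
Your proposal is correct and follows exactly the paper's route: the paper derives this proposition directly from Lemma \ref{lem:Kneser} and Lemma \ref{lem:tr-regular}, and your substitution of $m=\tfrac{1}{2}\binom{p}{k}\binom{p-k}{k}$ and $k'=\tfrac{2W(KG_{p,k})}{\binom{p}{k}}$ into $S_1=2mk'$ and $S_2=m(k')^2$ is precisely the intended (and omitted) computation. The one step you flag explicitly, that vertex-transitivity implies transmission regularity, is indeed the hinge that the paper leaves implicit.
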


Following Proposition follows from Proposition \ref{Prop2.1}, Lemma \ref{lem:Kneser} and Proposition \ref{Prop3.4}.

\begin{prop}\label{Prop3.5}
	For a Kneser graph $KG_{p,k}$ we have
	\[\overline{S_1}(KG_{p,k})=2W(KG_{p,k})\left[\binom{p}{k} - \binom{p-k}{k} - 1 \right] \]
	and
	\[ \overline{S_2}(KG_{p,k})=2(W(KG_{p,k}))^2 - W(KG_{p,k}) - \binom{p-k}{k}\left[\frac{2(W(KG_{p,k}))^2}{\binom{p}{k}}\right].\]
\end{prop}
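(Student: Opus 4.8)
The plan is to obtain both identities by specializing the general coindex formulas of Proposition~\ref{Prop2.1} to the Kneser graph, feeding in the common transmission value from Lemma~\ref{lem:Kneser} and the index values from Proposition~\ref{Prop3.4}. Throughout write $n=\binom{p}{k}$ for the number of vertices and $W=W(KG_{p,k})$. By Lemma~\ref{lem:Kneser} the graph is vertex-transitive, so every vertex $A$ has the same transmission $\sigma_{KG_{p,k}}(A)=k_0:=\frac{2W}{\binom{p}{k}}$; this one fact is what lets every vertex sum collapse into closed form.

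For the first coindex I would insert $n=\binom{p}{k}$ and $S_1(KG_{p,k})=2W\binom{p-k}{k}$ into the identity $\overline{S_1}(G)=2(n-1)W(G)-S_1(G)$ of Proposition~\ref{Prop2.1}. Then $2(\binom{p}{k}-1)W-2W\binom{p-k}{k}$ factors as $2W[\binom{p}{k}-\binom{p-k}{k}-1]$, which is the claimed expression. There is no real obstacle here beyond the factorization.

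For the second coindex I would start from $\overline{S_2}(G)=2W^2-\frac{1}{2}\sum_{u}\sigma_G(u)^2-S_2(G)$. The only ingredient not already named is $\frac{1}{2}\sum_u\sigma_G(u)^2$, and evaluating it is exactly where vertex-transitivity does the work: since $\sigma_{KG_{p,k}}(u)=k_0$ for every $u$, we get $\frac{1}{2}\sum_u\sigma_G(u)^2=\frac{1}{2}nk_0^2=\frac{1}{2}\binom{p}{k}\left(\frac{2W}{\binom{p}{k}}\right)^2=\frac{2W^2}{\binom{p}{k}}$. Substituting this together with $S_2(KG_{p,k})=\binom{p-k}{k}\frac{2W^2}{\binom{p}{k}}$ from Proposition~\ref{Prop3.4} gives $\overline{S_2}(KG_{p,k})=2W^2-\frac{2W^2}{\binom{p}{k}}-\binom{p-k}{k}\frac{2W^2}{\binom{p}{k}}$.

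Since each formula is a one-line substitution into results already proved, the main obstacle is not conceptual but a bookkeeping point I would want to flag: the middle term produced by the computation is $\frac{2W^2}{\binom{p}{k}}$, not the $W$ printed in the displayed statement. As a sanity check I would test both readings on the Petersen graph $KG_{5,2}$, where $n=10$, $k_0=15$ and $W=75$; a direct count gives $\overline{S_2}=6750$, which agrees with $2W^2-\frac{2W^2}{\binom{p}{k}}-S_2$ and not with the version containing $-W$. This confirms that $\frac{2W^2}{\binom{p}{k}}$ is the correct middle term, and otherwise the proposition is an immediate consequence of Propositions~\ref{Prop2.1} and~\ref{Prop3.4} together with Lemma~\ref{lem:Kneser}.
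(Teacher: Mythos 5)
Your derivation is exactly the route the paper takes: Proposition~\ref{Prop3.5} is asserted there as an immediate consequence of Proposition~\ref{Prop2.1}, Lemma~\ref{lem:Kneser} and Proposition~\ref{Prop3.4} with no further detail, and your substitutions --- the factorization $2(\binom{p}{k}-1)W-2W\binom{p-k}{k}=2W[\binom{p}{k}-\binom{p-k}{k}-1]$ for the first formula, and $\frac{1}{2}\sum_{u}\sigma(u)^2=\frac{1}{2}\binom{p}{k}\left(\frac{2W}{\binom{p}{k}}\right)^2=\frac{2W^2}{\binom{p}{k}}$ via vertex-transitivity for the second --- are precisely the intended ones. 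You are also right to flag the middle term of the second displayed formula as an error in the paper: the correct term is $-\frac{2(W(KG_{p,k}))^2}{\binom{p}{k}}$ rather than $-W(KG_{p,k})$, and your Petersen-graph check ($\overline{S_2}(KG_{5,2})=6750$ by direct count, versus $7800$ from the printed formula) confirms this.
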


A nanostructure called \textit{achiral polyhex nanotorus} (or \textit{toroidal fullerenes} of perimeter $p$ and length $q$, denoted by $T[p, q]$ is depicted in Figure 4 and its 2-dimensional molecular graph is in Figure 5. It is regular of degree 3 and has $pq$ vertices and $\frac{3pq}{2}$ edges.

\begin{figure}[h!]
	\centering
	\includegraphics[width=4.5cm]{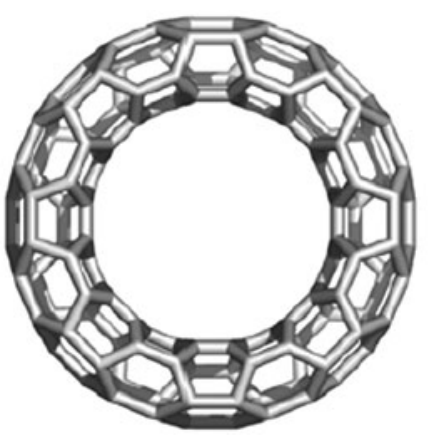}\\
	Figure 4: A achiral polyhex nanotorus (or toroidal fullerene) $T[p,q]$
	\label{fig:apn}
\end{figure}

\begin{figure}[h!]
	\centering
	\includegraphics[width=4.5cm]{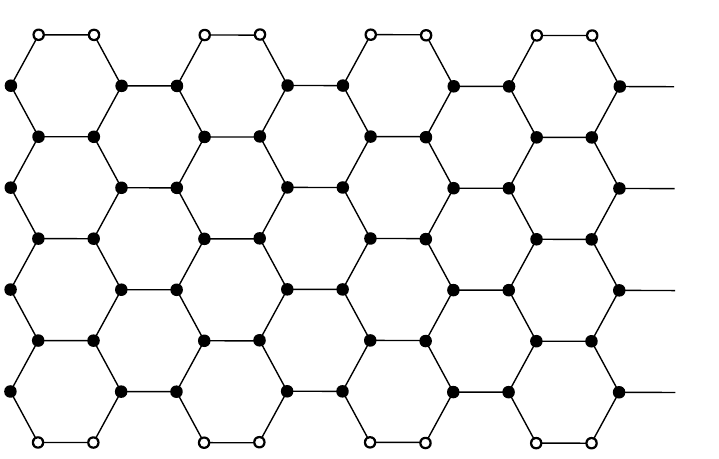}\\
	Figure 5: A 2-dimensional lattice for an achiral polyhex nanotorus $T[p,q]$
	\label{fig:apnlattice}
\end{figure}

The following lemma was proved in \cite{Ashrafi-polyhex} and \cite{Yousefi-2008}.
\begin{lem}[\cite{Ashrafi-polyhex},\cite{Yousefi-2008}]\label{lem:pn-vt}
	The achiral polyhex nanotorus $T = T[p,q]$ is vertex transitive such that for an arbitrary vertex $u\in V(T)$
	\[
	\sigma_T(u)=\left\{
	\begin{array}{ll}
	\dfrac{q}{12}(6p^2+q^2-4), & \hbox{$q<p$;} \\[5mm]
	\dfrac{p}{12}(3q^2+3pq+p^2-4), & \hbox{$q\ge p$.}
	\end{array}
	\right.\]
\end{lem}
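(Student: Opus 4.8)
The statement has two components: that $T=T[p,q]$ is vertex-transitive, and the explicit value of the common transmission $\sigma_T(u)$. The plan is to treat these in turn, using the first to reduce the second to a single base vertex. I would begin by realizing $T[p,q]$ as the quotient of the infinite honeycomb lattice $\Lambda$ by a rank-two translation lattice $L$, so that $T$ inherits from $\Lambda$ a group of ``translation'' automorphisms isomorphic to $\Lambda/L$. Concretely, from the $2$-dimensional picture in Figure 5 one reads off two commuting automorphisms: a cyclic shift $\rho$ of step $1$ around the perimeter (cycling the $p$ columns) and a shift $\tau$ of step $1$ along the length (cycling the $q$ rows, composed with the horizontal offset forced by the brick-wall pattern). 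Since any vertex can be carried to any other by a suitable product $\rho^a\tau^b$, the group $\langle\rho,\tau\rangle$ acts transitively on $V(T)$, so $T$ is vertex-transitive. In particular $\sigma_T$ is constant on $V(T)$, and it suffices to compute $\sigma_T(u_0)=\sum_{v\in V(T)}d_T(u_0,v)$ for one fixed base vertex $u_0$.

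Next I would set up coordinates and derive a distance formula. Label the vertices of the $p\times q$ lattice by pairs $(i,j)$ with $0\le i<p$ and $0\le j<q$, compatible with the honeycomb adjacency. Lifting to the universal cover $\Lambda$, the honeycomb graph distance between two vertices is a known piecewise-linear (essentially $\ell_1$-type, with a parity correction coming from the bipartite structure) function of the coordinate differences; on the torus the distance is then the minimum of this function over all representatives obtained by adding elements of $L$, that is over the wrap-arounds $(\Delta i,\Delta j)\mapsto(\Delta i\pm p,\Delta j)$ and $(\Delta i,\Delta j)\mapsto(\Delta i,\Delta j\pm q)$. Fixing $u_0=(0,0)$, this yields $d_T(u_0,(i,j))$ as an explicit expression in $i,j$ involving minima and parity corrections.

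Finally I would sum $d_T(u_0,(i,j))$ over all $(i,j)$. The two regimes $q<p$ and $q\ge p$ appear precisely because the shortest geodesic to a far vertex may prefer to wrap around the shorter of the perimeter or the length, so which wrap-around realizes the minimum depends on the aspect ratio; this changes the effective summation region and hence the closed form. In each case I would split the double sum according to which linear piece of the distance is active, evaluate the resulting arithmetic sums (sums of $j$, $j^2$ and the like over the appropriate ranges), and simplify to match the stated cubics $\frac{q}{12}(6p^2+q^2-4)$ and $\frac{p}{12}(3q^2+3pq+p^2-4)$. I expect the main obstacle to be bookkeeping rather than conceptual: pinning down the exact distance formula including the parity and boundary terms of the honeycomb lattice, correctly locating the crossover where one wrap-around overtakes another, and carrying the case split cleanly through the summation so that the floor functions and edge effects collapse into the clean polynomials claimed. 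As a consistency check, vertex-transitivity together with Theorem \ref{thm:orbit-w} forces $W(T)=\tfrac12 pq\,\sigma_T(u_0)$, which I would use to verify the final expressions.
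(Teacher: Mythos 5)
The paper does not prove this lemma at all: it is imported verbatim from \cite{Ashrafi-polyhex} and \cite{Yousefi-2008}, so there is no internal argument to compare against, and your outline is essentially a reconstruction of the strategy of those cited computations. Judged as a proof, however, it has two genuine gaps. First, the vertex-transitivity argument does not work as stated. The honeycomb (polyhex) lattice has \emph{two} vertices per translational unit cell, and every translation automorphism preserves the resulting two classes of vertices; consequently the group $\langle\rho,\tau\rangle$ you build from the two cyclic shifts acts with two orbits on $V(T)$, not one. Worse, a shift of step $1$ around the perimeter is typically not an automorphism at all in the zigzag (achiral) pattern, since it sends a vertex whose third edge points ``up'' to a position whose third edge points ``down'' and thus fails to preserve adjacency; only even shifts are translations of the lattice. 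To get transitivity one must adjoin an extra symmetry (a reflection or a point reflection through an edge midpoint) that exchanges the two sublattices, which is exactly what the cited sources do. Without this, the reduction of $\sigma_T$ to a single base vertex is unjustified.

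Second, the entire quantitative content of the lemma --- the exact piecewise distance formula on the torus, the location of the crossover between the two wrap-around regimes, and the evaluation of the double sum yielding $\frac{q}{12}(6p^2+q^2-4)$ for $q<p$ and $\frac{p}{12}(3q^2+3pq+p^2-4)$ for $q\ge p$ --- is deferred as ``bookkeeping.'' That bookkeeping is the theorem: the case split and the specific cubics can only be confirmed by actually writing down $d_T(u_0,(i,j))$ with its parity corrections and carrying out the summation. The consistency check you propose via Theorem \ref{thm:orbit-w} is circular for this purpose, since $W(T)=\frac{1}{2}pq\,\sigma_T(u_0)$ is an identity that holds for whatever value $\sigma_T(u_0)$ turns out to be and cannot certify the claimed closed forms.
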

The following is a direct consequence of Lemma \ref{lem:tr-regular} and Lemma \ref{lem:pn-vt}.
\begin{cor} \label{Cor3.7}
	Let $T = T[p,q]$ be a achiral polyhex nanotorus. Then
	\[
	S_1(T)=\left\{
	\begin{array}{ll}
	\dfrac{pq^2}{4}(6p^2+q^2-4),     & \hbox{$q<p$;} \\[5mm]
	\dfrac{p^2q}{4}(3q^2+3pq+p^2-4), & \hbox{$q\ge p$.}
	\end{array}
	\right.\]
	And
	\[
	S_2(T)=\left\{
	\begin{array}{ll}
	\dfrac{pq^3}{96}(6p^2+q^2-4)^2,     & \hbox{$q<p$;} \\[5mm]
	\dfrac{p^3q}{96}(3q^2+3pq+p^2-4)^2, & \hbox{$q\ge p$.}
	\end{array}
	\right.\]
\end{cor}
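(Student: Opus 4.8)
The plan is to invoke the two lemmas stated immediately before Corollary \ref{Cor3.7}, which together reduce the problem to mechanical substitution. The key observation is that Lemma \ref{lem:pn-vt} asserts that the achiral polyhex nanotorus $T = T[p,q]$ is vertex-transitive, hence it is $k$-transmission regular with the common transmission value $k = \sigma_T(u)$ given explicitly (and piecewise in the two regimes $q < p$ and $q \ge p$). Since every vertex has the same status, Lemma \ref{lem:tr-regular} applies directly: it gives $S_1(T) = 2mk$ and $S_2(T) = mk^2$, with no further structural analysis of the torus required.

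First I would record the edge count $m = \frac{3pq}{2}$, which is stated in the paragraph introducing $T[p,q]$. Then I would substitute this $m$ together with the appropriate branch of $k$ from Lemma \ref{lem:pn-vt} into the formulas of Lemma \ref{lem:tr-regular}. For the first index, in the regime $q < p$ this produces
\[
S_1(T) = 2\cdot\frac{3pq}{2}\cdot\frac{q}{12}(6p^2+q^2-4) = \frac{pq^2}{4}(6p^2+q^2-4),
\]
and the case $q \ge p$ follows identically after swapping in the second branch of $k$. For the second index I would compute $S_2(T) = mk^2 = \frac{3pq}{2}\cdot k^2$; in the regime $q < p$ this yields $\frac{3pq}{2}\cdot\frac{q^2}{144}(6p^2+q^2-4)^2 = \frac{pq^3}{96}(6p^2+q^2-4)^2$, matching the claimed expression, with the $q \ge p$ case analogous.

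There is essentially no hard part here: the entire content is packaged into the cited lemmas, and what remains is verifying that the arithmetic constants collapse correctly. The only point demanding a little care is tracking the numerical factors in $S_2$, where the $2$ from $k^2 = \big(\tfrac{q}{12}(\cdots)\big)^2$ carrying a $\tfrac{1}{144}$ must combine with the $\tfrac{3pq}{2}$ to give the stated denominator $96$; confirming $\frac{3}{2}\cdot\frac{1}{144} = \frac{1}{96}$ is the single nontrivial cancellation. Accordingly I would present the argument as a one-line appeal to Lemmas \ref{lem:tr-regular} and \ref{lem:pn-vt}, and then display the two substitutions, one per regime, as verification that the coefficients reduce to the asserted forms.
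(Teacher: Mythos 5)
Your proposal is correct and coincides with the paper's own argument: the paper likewise presents Corollary \ref{Cor3.7} as a direct consequence of Lemma \ref{lem:tr-regular} and Lemma \ref{lem:pn-vt}, substituting $m=\frac{3pq}{2}$ and the piecewise transmission value $k$. The arithmetic you display (including the $\frac{3}{2}\cdot\frac{1}{144}=\frac{1}{96}$ cancellation) is accurate.
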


\begin{cor} \label{Cor3.8}
	Let $T = T[p,q]$ be a achiral polyhex nanotorus. Then
	\[
	\overline{S_1}(T)=\left\{
	\begin{array}{ll}
	\dfrac{pq^2}{12}(pq-4)(6p^2+q^2-4),     & \hbox{$q<p$;} \\[5mm]
	\dfrac{p^2q}{12}(pq-4)(3q^2+3pq+p^2-4), & \hbox{$q\ge p$.}
	\end{array}
	\right.\]
	And
	\[
	\overline{S_2}(T)=\left\{
	\begin{array}{ll}
	\dfrac{pq^3}{288}(pq-4)(6p^2+q^2-4)^2,     & \hbox{$q<p$;} \\[5mm]
	\dfrac{p^3q}{288}(pq-4)(3q^2+3pq+p^2-4)^2, & \hbox{$q\ge p$.}
	\end{array}
	\right.\]
\end{cor}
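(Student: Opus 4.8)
The plan is to observe that Corollary~\ref{Cor3.8} follows from the vertex-transitivity of $T$ recorded in Lemma~\ref{lem:pn-vt} together with the transmission-regular co-index formulas in Lemma~\ref{lem:tr-regular-coindex}, in exactly the same way that Corollary~\ref{Cor3.7} followed from Lemma~\ref{lem:tr-regular}. Since $T = T[p,q]$ is vertex-transitive, it is $k$-transmission regular, where the common status value $k = \sigma_T(u)$ is given by the two cases in Lemma~\ref{lem:pn-vt}. Recording the structural data $n = pq$ and $m = \tfrac{3pq}{2}$, I would substitute directly into $\overline{S_1}(G) = 2\binom{n}{2}k - 2mk$ and $\overline{S_2}(G) = \binom{n}{2}k^2 - mk^2$.

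First I would simplify the graph-independent prefactors. For the first co-index, $2\binom{n}{2}k - 2mk = k\big[n(n-1) - 2m\big]$, and plugging in $n = pq$, $m = \tfrac{3pq}{2}$ gives $n(n-1) - 2m = pq(pq-1) - 3pq = pq(pq-4)$, so that $\overline{S_1}(T) = pq(pq-4)\,k$. For the second co-index, $\binom{n}{2}k^2 - mk^2 = k^2\big[\tfrac{n(n-1)}{2} - m\big] = \tfrac{1}{2}\,pq(pq-4)\,k^2$. Thus both co-indices are pinned down by the single scalar $k$ and the common factor $pq(pq-4)$.

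It then remains to insert the two expressions for $k$ from Lemma~\ref{lem:pn-vt}. In the regime $q < p$ one has $k = \tfrac{q}{12}(6p^2+q^2-4)$, which yields $\overline{S_1}(T) = \tfrac{pq^2}{12}(pq-4)(6p^2+q^2-4)$ and, after squaring $k$, $\overline{S_2}(T) = \tfrac{pq^3}{288}(pq-4)(6p^2+q^2-4)^2$; in the regime $q \ge p$, substituting $k = \tfrac{p}{12}(3q^2+3pq+p^2-4)$ produces the stated $p^2q$ and $p^3q$ formulas analogously. There is no genuine obstacle here: the statement is a direct consequence of the two cited lemmas, and the only real work is bookkeeping the constants $\tfrac{1}{12}$ and $\tfrac{1}{144}$ consistently across the two cases. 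Accordingly, I would present it as a one-line appeal to Lemma~\ref{lem:tr-regular-coindex} and Lemma~\ref{lem:pn-vt} followed by this short computation, rather than as an independent argument.
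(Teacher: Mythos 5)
Your proposal is correct and follows essentially the same route as the paper: both arguments exploit that $T[p,q]$ is transmission regular with the status value $k$ from Lemma~\ref{lem:pn-vt} and then substitute $n=pq$, $m=\tfrac{3pq}{2}$; the paper phrases this by first writing down $W(T)=\tfrac{1}{2}nk$ and invoking Proposition~\ref{Prop2.1} together with Corollary~\ref{Cor3.7}, whereas you invoke Lemma~\ref{lem:tr-regular-coindex} directly, but since that lemma is itself a consequence of Proposition~\ref{Prop2.1} the two derivations coincide. Your arithmetic, reducing both co-indices to the common factor $pq(pq-4)$, checks out in all four cases.
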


\begin{proof}
	Since $2W(G) = \sum_{u \in V(G)}\sigma_G(u)$ and polyhex nanotorus $T[p, q]$ has $pq$ vertices, by Lemma \ref{lem:pn-vt}, the Wiener index of polyhex nanotorus $T[p, q]$ is as follows \cite{Yousefi-2008}:
	
	\[
	W(T)=\left\{
	\begin{array}{ll}
	\dfrac{pq^2}{24}(6p^2+q^2-4),     & \hbox{$q<p$;} \\[5mm]
	\dfrac{p^2q}{24}(3q^2+3pq+p^2-4), & \hbox{$q\ge p$.}
	\end{array}
	\right.\]
	
	Substituting this in the Proposition \ref{Prop2.1} and Corollary \ref{Cor3.7} we get the results.
\end{proof}

\noindent
\textbf{Acknowledgement:}
The author HSR is thankful to the University Grants Commission (UGC), Govt. of India for support through grant under
UGC-SAP DRS-III, 2016-2021: F.510/3/DRS-III /2016 (SAP-I). The second author
ASY is thankful to the University Grants Commission (UGC), Govt. of India for support through Rajiv Gandhi National Fellowship No. F1-17.1/2014-15/RGNF-2014-15-SC-KAR-74909.

\end{document}